\def\C{\mathbb C}
\def\P{\mathbb P}
\def\bcases{\begin{cases}}
\def\ecases{\end{cases}}
\newcommand{\e}{\epsilon}
\newcommand{\N}{\mathbb N}
\newcommand{\set}[1]{\left\{ #1\right\}}
\def\sm{\setminus}
\newcommand{\W}{\Omega}
\newcommand{\Z}{\mathbb Z}
\newtheorem{thm}{Theorem}
\newtheorem{main theorem}{Main Theorem}
\newtheorem{lemma}{Lemma}
\newtheorem{prop}{Proposition}
\newtheorem{problem 1}{Problem 1}
\newtheorem{problem 2}{Problem 2}
\newtheorem{problem 3}{Problem 3}
\theoremstyle{definition}
\newtheorem{defn}{Definition}
\newcommand{\bea}{\begin{eqnarray*}}
\newcommand{\eea}{\end{eqnarray*}}
\newcommand{\be}{\begin{equation}}
\newcommand{\ee}{\end{equation}}
\begin{document}

\title[Tautness and Fatou components in $ \P^2 $]
{Tautness and Fatou components in $ \P^2 $}

\author{Han Peters and Crystal Zeager}

\thanks{The first author was supported by a SP3-People Marie Curie Actionsgrant in the project Complex Dynamics (FP7-PEOPLE-2009-RG, 248443). The second author the was supported by NSF RTG grant DMS-0602191.}
\today

\begin{abstract}
We investigate the tautness of invariant Fatou components for holomorphic endomorphisms of $\P^2$. Previously, only basins of attraction were known to be taut. We show that two other kinds of recurrent Fatou components are taut. In the first of these cases, as well as for basins of attraction, we show that the Fatou components are in fact Kobayashi complete, which implies tautness.
 \end{abstract}

\maketitle

\begin{section}{introduction}

Although the study of Fatou components for complex dynamical systems in several variables has received quite a bit of interest in recent years, many elementary questions have not yet been answered. Here we will investigate one of these questions, namely whether the Fatou components of a holomorphic endomorphism of $\P^2$ are taut. This question was raised by Abate after he classified the dynamical behavior of holomorphic self-maps of taut domains \cite{Abate}. So far the only Fatou components for which the question is settled are basins of attraction, \cite{Weickert}. In this article we will prove tautness for several other classes of Fatou components. Before we go into details we will recall a few relevant results and definitions.

\begin{defn} A family $\mathcal{F} \subset \mathcal{O}(X, \W)$ is called normal if every sequence $ (f_n) \subset \mathcal{O}(X,\W)$ either has a subsequence $ (f_{n_i}) $ converging uniformly on compacts to $ f \in  \mathcal{O}(X,\W) $, or has a subsequence $ (f_{n_i}) $ which is \emph{compactly divergent}. A subsequence $ (f_{n_i}) $ is called compactly divergent if for any two compact sets $ K \subset X$ and $ L \subset \W $ there exists $ I \in \N$ so that for all $ i > I $, $ f_{n_i}(K) \cap L = \emptyset $.
\end{defn}

\begin{defn}
Let $f$ be a holomorphic endomorphism of a complex manifold $X$. A point $z \in X$ is said to lie in the {\emph{Fatou set}} $\mathcal{F}$ if there exists a neighborhood $U(z)$ on which the family of iterates $\{f^n\}$ is a normal family. The connected components of $\mathcal{F}$ are called \emph{Fatou components}.
\end{defn}

An important ingredient in the study of one-dimensional Fatou components is the Denjoy-Wolff Theorem \cite{Wolff1}, \cite{Wolff2} \cite{Denjoy}:

\begin{thm}[Wolff-Denjoy]\label{WoDe}
Let $f: \Delta \rightarrow \Delta$ be a holomorphic selfmap of the unit disc. Then either $f$ is a rotation, with respect to Poincar{\'e} metric, about some fixed point $z \in \Delta$, or $f^n$ converges uniformly on compact subsets to a point $z \in \bar{\Delta}$.
\end{thm}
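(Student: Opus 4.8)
The plan is to reduce everything to the Schwarz--Pick lemma, splitting into two cases according to whether $f$ has a fixed point in $\Delta$. Suppose first that $f$ fixes some $z_0 \in \Delta$. After conjugating by a M\"obius automorphism of $\Delta$ I may assume $z_0 = 0$, so that the classical Schwarz lemma applies: either $|f'(0)| = 1$, in which case $f(z) = e^{i\theta}z$ is a Poincar\'e rotation about $0$, which is the first alternative of the theorem; or $|f'(0)| < 1$, in which case $|f(z)| < |z|$ for $z \neq 0$ and the iterates $f^n$ contract toward $0$, converging uniformly on compacts to the constant $z_0$. Thus the fixed-point case is immediate, and the substance lies in the fixed-point-free case, where I must produce a boundary point $\tau$ with $f^n \to \tau$.

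The engine for that case is Wolff's lemma: there is a point $\tau \in \partial\Delta$ whose horocycles $E(\tau,R) = \{\, z \in \Delta : |\tau - z|^2 < R(1-|z|^2)\,\}$ are all $f$-invariant, equivalently $\frac{|\tau - f(z)|^2}{1-|f(z)|^2} \le \frac{|\tau - z|^2}{1-|z|^2}$ for every $z \in \Delta$. To prove this I would approximate: for $r<1$ the map $f_r := rf$ sends $\overline{\Delta}$ into the compact set $r\overline{\Delta} \subset \Delta$, so it has a fixed point $z_r \in \Delta$ (by Brouwer, or by the Schwarz--Pick contraction principle). Choosing $r \to 1^-$ along a sequence, I pass to a limit $z_r \to \tau \in \overline{\Delta}$; the limit cannot lie in $\Delta$, since the relation $f(z_r) = z_r/r$ would then force $f(\tau) = \tau$, contradicting the absence of interior fixed points, so $\tau \in \partial\Delta$. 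Feeding $f(z_r) = z_r/r$ into the Schwarz--Pick inequality between $z$ and $z_r$, then squaring, clearing denominators, and letting $r \to 1$ (so that $|z_r| \to 1$), yields the Wolff inequality in the limit. This limiting computation is the main obstacle: one must keep careful track of the factors $(1-|z_r|^2)$ so that the estimate survives the degeneration of $z_r$ to the boundary.

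Finally I would harvest the conclusion from the invariant horocycles by a normality argument. The family $\{f^n\}$ is locally bounded, hence normal, and any locally uniform subsequential limit $g$ maps $\Delta$ into $\overline{\Delta}$; I claim $g$ cannot take an interior value. If $g \equiv w \in \Delta$ is constant, then since $f^{n_k+1} = f^{n_k}\circ f$ also tends to $w$ while $f^{n_k+1}(w) = f(f^{n_k}(w)) \to f(w)$, I get $f(w) = w$, a forbidden fixed point. If instead $g$ is non-constant, so $g(\Delta) \subset \Delta$ is open, then comparing $f^{n_{k+1}} = f^{m_k}\circ f^{n_k}$ (with $m_k = n_{k+1}-n_k$, after passing to a subsequence along which $f^{m_k}$ converges) with $f^{n_{k+1}} \to g$ forces $\lim f^{m_k}$ to equal the identity on $g(\Delta)$, hence everywhere by the identity theorem; this makes $f$ a fixed-point-free automorphism of $\Delta$, necessarily hyperbolic or parabolic, whose iterates converge to a boundary point rather than to the identity, a contradiction (and if the exponents $m_k$ stay bounded, a constant subsequence gives $f^m = \mathrm{id}$, an elliptic automorphism with an interior fixed point). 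Therefore $g(\Delta) \subseteq \partial\Delta$, so $g$ is a boundary constant $c$; horocycle invariance gives $c \in \overline{E(\tau,R)} \cap \partial\Delta = \{\tau\}$ for every $R$, whence $c = \tau$. Since every subsequential limit equals the single constant $\tau$, the whole sequence $f^n$ converges to $\tau$ uniformly on compacts, completing the dichotomy.
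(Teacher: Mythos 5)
The paper does not prove Theorem \ref{WoDe} at all: it is quoted as classical background with citations to Wolff's and Denjoy's 1926 notes, so there is no in-paper argument to compare yours against. Judged on its own, your proposal is the standard and correct proof: Schwarz's lemma settles the fixed-point case, Wolff's lemma (via the fixed points $z_r$ of $rf$) produces the invariant horocycles, and the normal-families analysis of subsequential limits rules out interior-valued limits and pins every boundary-constant limit to $\tau$ via $\overline{E(\tau,R)} \cap \partial\Delta = \{\tau\}$. The two steps you gloss are exactly the ones needing care, and both go through as you set them up: (i) in the limit computation for Wolff's lemma, with $w_r = z_r/r$ the identity $1-\rho(a,b)^2 = (1-|a|^2)(1-|b|^2)/|1-\bar{b}a|^2$ turns Schwarz--Pick into
$\frac{|1-\bar{w}_r f(z)|^2}{1-|f(z)|^2} \le \frac{|1-\bar{z}_r z|^2}{1-|z|^2}\cdot\frac{1-|w_r|^2}{1-|z_r|^2}$,
and the last factor is at most $1$ because $|w_r| \ge |z_r|$, so the estimate survives the degeneration $|z_r| \to 1$ and yields the horocycle inequality since $|1-\bar{\tau}z| = |\tau - z|$ for $|\tau|=1$; (ii) in the non-constant-limit case, passing from $f^{m_k} \to \mathrm{id}$ to ``$f$ is an automorphism'' deserves a line: extract a further subsequence along which $f^{m_k-1}$ converges to some $h'$ and verify $f \circ h' = h' \circ f = \mathrm{id}$ (injectivity alone is immediate, surjectivity is not). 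One small slip: $f$ need not extend continuously to $\overline{\Delta}$, so $rf$ does not act on $\overline{\Delta}$ as written; instead apply Brouwer (or the Banach contraction principle for the Poincar\'e metric) to $rf$ on the closed disc $\overline{r\Delta}$, which it visibly maps into itself. With these routine repairs your argument is complete and is precisely the classical Wolff--Denjoy proof the paper implicitly invokes.
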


A similar result can be proved for hyperbolic Riemann surfaces, and since every Fatou component of a rational function of degree at least $2$ is hyperbolic, Theorem \ref{WoDe} was an important step towards the classification of invariant Fatou components in one complex dimension.

In an attempt to generalize these results to higher dimensions, Abate studied the dynamics on {\emph{taut}} complex manifolds.

\begin{defn}
A manifold  $\W$ is called taut if $ \mathcal{O}(\Delta,\W) $ is a normal family.
\end{defn}

Let $\Gamma(f)$ be the set of all limits $h \in \mathcal{O}(X, X)$ of convergent subsequences $f^{n_j}$. In \cite{Abate}, Abate proved the following:

\begin{thm}[Abate]\label{Abate}
Let $X$ be a taut manifold and suppose that $f \in \mathcal{O}(X,X)$ is not compactly divergent. Then $\Gamma(f)$ is isomorphic to a compact Abelian group of the form $\Z_q \times \mathbb{T}^r$, where $\Z_q$ is the cyclic group of order $q$, and $\mathbb{T}^r$ is the real torus group of rank $r$.
\end{thm}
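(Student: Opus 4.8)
The plan is to realize $\Gamma(f)$ as a compact abelian group by analyzing the closure of the iterate semigroup inside $\mathcal O(X,X)$. First I would use that a taut manifold is Kobayashi hyperbolic, so that $\mathcal O(X,X)$ is a normal family and the iterates $\{f^n\}$ form a locally equicontinuous family; in particular composition is jointly continuous on the relevant compact subsets of $\mathcal O(X,X)$. Since $f$ is not compactly divergent, normality forces a subsequence of $\{f^n\}$ to converge rather than diverge compactly, so $\Gamma(f)\neq\emptyset$. The first structural step is to verify that $\Gamma(f)$ is a compact commutative semigroup under composition: it is closed (hence compact) as the cluster set $\bigcap_N\overline{\{f^n:n\ge N\}}$; it is closed under composition because $f^p\circ(\lim_k f^{m_k})=\lim_k f^{p+m_k}\in\Gamma(f)$ and then $\Gamma(f)\circ\Gamma(f)\subseteq\Gamma(f)$ by continuity; and it is commutative because the $f^n$ commute and composition is continuous on the equicontinuous family.

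Next I would extract an idempotent $\rho\in\Gamma(f)$. One way is to choose $f^{n_j}\to\rho$ with gaps $n_{j+1}-n_j\to\infty$ and $f^{\,n_{j+1}-n_j}\to\rho$ as well, so that passing to the limit in $f^{n_{j+1}}=f^{\,n_{j+1}-n_j}\circ f^{n_j}$ yields $\rho\circ\rho=\rho$; alternatively invoke the theorem that a nonempty compact topological semigroup contains an idempotent. The crucial point is then that a holomorphic idempotent self-map is a holomorphic retraction: $M:=\rho(X)$ is a closed complex submanifold, $\rho|_M=\operatorname{id}_M$, and $\rho\colon X\to M$ retracts $X$ onto $M$. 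This is where holomorphicity, rather than mere continuity, enters, via the theory of fixed-point sets of holomorphic retractions.

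The heart of the argument is to promote $\Gamma(f)$ from a semigroup to a group. From $f^{n_j}\to\rho$ I would deduce $\rho\circ f=f\circ\rho$, hence $f(M)\subseteq M$, and by extracting a convergent subsequence $f^{n_j-1}\to g\in\Gamma(f)$ obtain $f\circ g=\rho$; restricting to $M$ gives $f|_M\circ g|_M=\operatorname{id}_M$, so $f|_M\in\operatorname{Aut}(M)$ with inverse $g|_M$. The structure theory of compact commutative semigroups (identifying $\Gamma(f)$ with its minimal ideal) then shows that $\rho$ is a two-sided identity and every element of $\Gamma(f)$ is invertible; in particular $h=h\circ\rho$ for all $h\in\Gamma(f)$, so each $h$ is determined by its restriction $h|_M$ via $h=(h|_M)\circ\rho$. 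Restriction therefore embeds $\Gamma(f)$ isomorphically onto the closure $\overline{\{(f|_M)^n\}}$ inside $\operatorname{Aut}(M)$, a compact abelian group topologically generated by the single automorphism $f|_M$.

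Finally I would classify this group. Since $M$ is a closed submanifold of a hyperbolic manifold it is hyperbolic, so $\operatorname{Aut}(M)$ is a real Lie group and the compact subgroup $\Gamma(f)$ is a compact abelian Lie group; every such group is isomorphic to $\mathbb T^r\times F$ with $F$ a finite abelian group. Being monothetic --- topologically generated by the one element $f|_M$ --- forces the finite factor $F$ to be cyclic, which yields $\Gamma(f)\cong\Z_q\times\mathbb T^r$. I expect the main obstacle to be the middle step: showing that the limit idempotent $\rho$ is a genuine holomorphic retraction onto a submanifold $M$ and that $f$ restricts there to an automorphism, since this is where tautness and holomorphicity must be combined; by contrast the final passage to the normal form $\Z_q\times\mathbb T^r$ is comparatively routine compact-group theory.
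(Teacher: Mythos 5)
First, a point of order: the paper does not prove this statement at all --- it is quoted verbatim as Abate's theorem with a citation to \cite{Abate}, so there is no in-paper proof to compare against. Your sketch must therefore be measured against Abate's published argument, and in outline it reconstructs it faithfully: extract a convergent subsequence of iterates, find an idempotent limit $\rho$, use the theory of holomorphic retractions (Rossi, H.~Cartan) to get a closed limit submanifold $M = \rho(X)$ with $\rho|_M = \mathrm{id}$, show $f|_M \in \mathrm{Aut}(M)$ via a limit of $f^{n_j - 1}$, identify $\Gamma(f)$ with the closure of $\{(f|_M)^n\}$ in $\mathrm{Aut}(M)$ (a Lie group since $M$ is hyperbolic), and finish with the classification of compact monothetic abelian Lie groups as $\Z_q \times \mathbb{T}^r$. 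That final step, including the observation that monothetic forces the finite factor to be cyclic, is correct.

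There is, however, one genuine gap as written, and it sits exactly where the hypothesis ``not compactly divergent'' does its work. You assert that $\Gamma(f)$ is ``closed (hence compact)'' as the cluster set $\bigcap_N \overline{\{f^n : n \ge N\}}$. Closedness gives nothing here: $\mathcal{O}(X,X)$ with the compact-open topology is not compact, so a closed subset need not be compact, and compactness of $\Gamma(f)$ is equivalent to relative compactness of the whole iterate sequence $\{f^n\}$ in $\mathcal{O}(X,X)$. This is precisely Abate's dichotomy (either $\{f^n\}$ is compactly divergent or it is relatively compact) and it requires an argument: equicontinuity of $\{f^n\}$ with respect to the Kobayashi distance (tautness gives hyperbolicity, and Barth gives that $d_X$ induces the topology), plus the fact that every orbit $\{f^n(x)\}$ is relatively compact in $X$, and then an Arzel\`a--Ascoli argument together with the closedness of $\mathcal{O}(X,X)$ in $C(X,X)$. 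The convergence of one subsequence $f^{n_j}$ gives relative compactness of $\{f^{n_j}(x)\}$ but not of the full orbit; one needs the monotonicity $d_X\bigl(f^{n+t}(x), f^{m+t}(x)\bigr) \le d_X\bigl(f^n(x), f^m(x)\bigr)$ to trap the intermediate iterates near the limit set. Without this step your semigroup $\Gamma(f)$ is not known to be compact, and both the idempotent extraction and the ``minimal ideal is a group'' structure theory you invoke require compactness as input. A smaller quibble: your gap-subsequence construction of the idempotent does not obviously make $f^{n_{j+1}-n_j}$ converge to the \emph{same} limit $\rho$; passing to the limit only yields $\rho = g \circ \rho$ for some $g \in \Gamma(f)$. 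Your fallback --- the Ellis--Numakura lemma that a nonempty compact semigroup with jointly continuous multiplication contains an idempotent --- is the correct fix and is essentially what Abate uses.
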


Theorem \ref{Abate} gives hope for a classification of invariant Fatou components in higher dimensions, if one could show that invariant Fatou components are taut. This is still an open question. Note that in $ \C^n $ Fatou components are not generally taut; Fatou-Bieberbach domains provide an easy counterexample. The good news is that by now there does exist a classification of invariant Fatou components for holomorphic endomorphisms of $\P^2$. It has become common in the literature to refer to Fatou components on which the family of iterates is not compactly divergent as {\emph{nonrecurrent}}.

\begin{defn}
An invariant Fatou component $\Omega$ is called recurrent if there exists an orbit $f^n(z)$ with an accumulation point in $\Omega$.
\end{defn}

For holomorphic endomorphisms of $\P^2$ recurrent Fatou components have been classified by Forn{\ae}ss and Sibony \cite{FS}, and nonrecurrent Fatou components have been classified by Weickert \cite{Weickert}. We recall the result from \cite{FS}:

\begin{thm}[Fornaess-Sibony] \label{classification}
Suppose that $f$ is a holomorphic self-map of $ \P^2 $ of degree $ d \ge 2 $. Suppose that $ \W $ is a fixed recurrent Fatou component. Then either:

\begin{enumerate}
\item \label{case1} $ \W $ is an attracting basin of some fixed point in $ \W $,
\item \label{case2} there exists a one dimensional closed complex submanifold $ \Sigma $ of $ \W $ and $ f^n(K) \mapsto \Sigma $ for any compact set $ K $ in $ \W $. The Riemann surface $ \Sigma $ is biholomorphic to a disc, a punctured disc, or an annulus and $ f|_\Sigma $ is conjugate to an irrational rotation, or
\item \label{case3} the domain $ \W $ is a Siegel domain.
\end{enumerate}

\end{thm}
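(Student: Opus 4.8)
The statement is a classification, so the plan is to extract from the recurrence hypothesis a single limit map of the iterates whose image governs all three cases, and then to read off the geometry of that image. Throughout, $\W$ is a fixed invariant recurrent Fatou component, so the iterates $\{f^n\}$ form a normal family on $\W$, and by recurrence there is a point $p\in\W$ and a subsequence with $f^{n_j}(p)\to q\in\W$. First I would extract, after passing to a further subsequence, a holomorphic limit $h=\lim f^{n_j}\colon\W\to\P^2$. Because $h(p)=q\in\W$ the limit is not compactly divergent, and since $\W$ is a Fatou component with $f(\W)=\W$, a maximum-principle and connectedness argument gives $h(\W)\subseteq\W$. Thus the set $\Gamma$ of non-divergent limits of the iterates on $\W$ is nonempty; using recurrence it is a compact subset of $\mathcal{O}(\W,\W)$ in the topology of uniform convergence on compacts, and it is closed under composition since $f^{n_j}\of f^{m_k}=f^{n_j+m_k}$.

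The key structural step is to produce an \emph{idempotent} in $\Gamma$. As $\Gamma$ is a compact semigroup with continuous composition, it contains an element $\rho$ with $\rho\of\rho=\rho$, that is, a holomorphic retraction of $\W$ realized as a limit of iterates. The image $\Sigma:=\rho(\W)$ of a holomorphic retraction is, by a classical argument, a closed complex submanifold of $\W$, and I expect the regularity and $f$-invariance of $\Sigma$ to be the main technical obstacle: one must check that $f(\Sigma)=\Sigma$ (using that $f\of\rho$ and $\rho\of f$ again lie in $\Gamma$), that $f|_\Sigma$ is an automorphism, and that the whole component is attracted onto $\Sigma$ in the sense $f^n(K)\to\Sigma$ for every compact $K\subset\W$. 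Granting this, the three cases of the theorem correspond exactly to $\dim_{\C}\Sigma\in\{0,1,2\}$.

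In the case $\dim\Sigma=0$ the retraction is constant, $\rho\equiv q$, so $f^{n_j}\to q$ and $f(q)=q$; one then argues that $q$ is genuinely attracting and that $\W$ is its basin, giving case \ref{case1}. In the case $\dim\Sigma=2$ we have $\Sigma=\W$ and $\rho=\mathrm{id}$, so $f^{n_j}\to\mathrm{id}$; this forces $f|_\W$ to be an automorphism whose iterates have the identity in their closure, which is precisely the Siegel situation, case \ref{case3}. The intermediate case $\dim\Sigma=1$ produces a closed one-dimensional submanifold $\Sigma$ with $f^n(K)\to\Sigma$, and it remains to analyze the recurrent automorphism $f|_\Sigma$ of the Riemann surface $\Sigma$.

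For that last analysis I would invoke one-variable theory: $f|_\Sigma$ is an automorphism of $\Sigma$ lying in a compact, hence torus-like, closure, so it is conjugate to a rotation, and the only Riemann surfaces carrying such a recurrent but non-periodic rotation that can sit as a closed submanifold of $\W\subset\P^2$ are the disc, the punctured disc, and the annulus; the periodic, compact (e.g.\ a torus, excluded here since $d\ge2$), and non-recurrent models are ruled out, and the rotation angle must be irrational, since a rational angle would make $f|_\Sigma$ periodic and contradict that $\rho$ is a genuine limit of iterates. This yields case \ref{case2}. The hardest parts, as flagged above, are the regularity and $f$-invariance of $\Sigma=\rho(\W)$ together with the global convergence $f^n(K)\to\Sigma$; the dimension trichotomy and the one-variable endgame are then comparatively routine.
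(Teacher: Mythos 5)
You should first know that the paper contains no proof of this statement: it is quoted verbatim as Theorem~\ref{classification} from Forn\ae ss--Sibony \cite{FS}, so the comparison must be with the original argument. Your scheme --- closure of the iterates as a compact semigroup under composition, extraction of an idempotent $\rho$, limit retraction with $\Sigma = \rho(\W)$ a closed submanifold (Cartan), trichotomy on $\dim_\C \Sigma$ --- is Abate's scheme for self-maps of \emph{taut} manifolds \cite{Abate}, and that is exactly where the genuine gap lies. Recurrence plus normality of $\{f^n\}$ gives relative compactness of the iterates only in $\mathcal{O}(\W,\P^2)$; limit maps can send $\W$ into $\partial\W$, the subset $\Gamma$ of limits with image in $\W$ need not be closed under locally uniform limits (a sequence of interior limits can degenerate to a boundary map), and composition is neither defined nor continuous across such degenerations. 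So your assertion that ``using recurrence, $\Gamma$ is a compact subset of $\mathcal{O}(\W,\W)$ closed under composition'' is precisely a tautness-type statement --- the very property the surrounding paper emphasizes is \emph{unknown} for these components (the paper explicitly notes the classification was achieved without tautness, and its purpose is to prove tautness in special cases). Without it, Ellis-type extraction of an idempotent does not get off the ground. Forn\ae ss and Sibony avoid this by proving a dichotomy lemma --- every limit $h$ of iterates satisfies $h(\W)\subset\W$ or $h(\W)\subset\partial\W$ --- using Ueda's lifting machinery (sections into the boundary of the attracting basin $\partial\mathcal{A}$ in $\C^3$, as in Theorem~\ref{BoundaryA}), and then run a maximal-rank analysis on limit maps, with rank $0$, $1$, $2$ yielding cases \ref{case1}, \ref{case2}, \ref{case3}; no idempotent or semigroup structure is needed.

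Two further points in the endgame are off. Your irrationality argument fails: a rational rotation, i.e.\ $f^k|_\Sigma = \mathrm{id}$, is perfectly compatible with $\rho$ being a genuine limit of iterates (take exponents divisible by the period). The actual obstruction is a degree count: $\mathrm{Fix}(f^k)$ is analytic, so $f^k|_\Sigma=\mathrm{id}$ would force $f^k$ to fix pointwise a curve, whereas the restriction of a degree-$d^k \ge 2$ endomorphism of $\P^2$ to an invariant algebraic curve has topological degree $d^k > 1$. Likewise, the list disc/punctured disc/annulus should come from Ueda's theorem \cite{Ueda} that Fatou components are Kobayashi hyperbolic: a closed submanifold of a hyperbolic manifold is hyperbolic, which excludes $\C$, $\C^*$, $\P^1$ and tori from the Riemann surfaces admitting a recurrent rotation, rather than your vaguer appeal to what ``can sit'' in $\W\subset\P^2$. (Your gloss that the rank-$0$ fixed point ``is genuinely attracting'' also needs the trapping-neighborhood argument $f^{n_j}(U)\subset\subset U$ plus a Kobayashi contraction, but that is an acceptable omission in a sketch.)
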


Although this classification has been achieved without knowing whether the components are taut, the tautness question remains interesting and may be useful to study finer properties of the dynamical behavior. The only result in this direction is by Weickert \cite{Weickert}, who showed that a Fatou component that is pre-periodic to a basin of attraction (case \ref{case1} in Theorem \ref{classification} above) is taut. We will prove (Theorem \ref{Sigmataut}) that a Fatou component that is pre-periodic to a Fatou component with an attracting Riemann surface (case \ref{case2} in Theorem \ref{classification}) is also taut. Then in Theorem \ref{Siegeltaut} we will prove that a subclass of Siegel domains (case \ref{case3} in Theorem \ref{classification}) is taut as well.

To state our results more precisely we need to consider Siegel discs in greater detail.

\begin{defn}
An invariant Fatou component $\W$ is called a \emph{Siegel domain} there exists a subsequence of iterates $f^{n_j}$ that converges to the identity on $\W$.
\end{defn}

Forn{\ae}ss and Sibony proved the following result:

\begin{prop}[Forn\ae ss -Sibony]\label{Siegel}
Let $W$ be a Siegel domain for $f$, a holomorphic endomorphism of $\P^2$ of degree at least $2$. Denote by $\Gamma_f$ the closure of $(f^n)_{n \in \N}$ in the topology of uniform convergence on compact sets. Then $\Gamma_f$ is a sub-Lie group of $\mathrm{Aut}(\W)$ and $\Gamma_f$ is isomorphic to $\mathbb{T}^k \times F$, where $F$ is a finite group and $k$ is $1$ or $2$.
\end{prop}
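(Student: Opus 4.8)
The plan is to identify $\Gamma_f$ with a compact abelian subgroup of $\mathrm{Aut}(\W)$ and then read off its isomorphism type from the structure theory of compact abelian Lie groups. The enabling geometric fact is that a Fatou component $\W$ of a degree $d\ge 2$ endomorphism of $\P^2$ is Kobayashi hyperbolic, so that $f|_\W$ is distance-nonincreasing for $d_\W$ and $\mathrm{Aut}(\W)$ is a real Lie group. Since composition is continuous for the compact-open topology and the iterates commute, $\Gamma_f=\overline{\{f^n\}}$ is a closed commutative sub-monoid of $\mathcal O(\W,\W)$, and the Siegel hypothesis gives a subsequence $f^{n_j}\to\mathrm{id}$, so $\mathrm{id}\in\Gamma_f$. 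The first substantive step is to show $\Gamma_f$ is compact, i.e.\ that no subsequence of $\{f^n\}$ is compactly divergent. For this I would use the estimate $d_\W(f^{n+n_j}(x),f^n(x))\le d_\W(f^{n_j}(x),x)\to 0$, valid uniformly in $n$ because $f$ is $1$-Lipschitz; this makes each orbit $(f^n(x))_n$ almost periodic and hence relatively compact in $\W$, which together with normality on the Fatou component forces every subsequence of $\{f^n\}$ to subconverge to a holomorphic self-map of $\W$.

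With compactness in hand, I would promote the monoid $\Gamma_f$ to a group of automorphisms. Given $h=\lim_i f^{m_i}\in\Gamma_f$, choose indices $n_{j(i)}>m_i$ from the Siegel sequence and set $p_i=n_{j(i)}-m_i$; by compactness a subsequence satisfies $f^{p_i}\to g\in\Gamma_f$. Passing to the limit in $f^{n_{j(i)}}=f^{m_i}\circ f^{p_i}$ and using $f^{n_{j(i)}}\to\mathrm{id}$ yields $h\circ g=\mathrm{id}$, and commutativity gives $g\circ h=\mathrm{id}$ as well. Thus every element of $\Gamma_f$ is a biholomorphism of $\W$ with inverse again in $\Gamma_f$, so $\Gamma_f$ is a compact abelian subgroup of $\mathrm{Aut}(\W)$. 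Now I would invoke Lie theory: since $\W$ is hyperbolic, $\mathrm{Aut}(\W)$ is a real Lie group whose topology is that of uniform convergence on compacts, and a compact subgroup of a Lie group is a compact Lie subgroup. Hence $\Gamma_f$ is a compact abelian Lie group, and by the structure theorem such a group is isomorphic to $\mathbb{T}^k\times F$ with $F$ finite and $\mathbb{T}^k$ its identity component.

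It remains to see that $k\in\{1,2\}$. For $k\ge 1$ it suffices that $\Gamma_f$ is infinite: otherwise $\{f^n\}$ would be finite and, $f$ being injective on $\W$, some $f^p$ would equal $\mathrm{id}$ on the open set $\W$; by the identity theorem $f^p=\mathrm{id}$ on all of $\P^2$, contradicting $\deg f^p=d^p\ge 2$. For $k\le 2$ I would use that $\mathbb{T}^k$ acts effectively and holomorphically on the hyperbolic complex surface $\W$: its orbits cannot be positive-dimensional compact complex submanifolds, and examining the $k$ commuting holomorphic generating vector fields at a generic orbit---whose $(1,0)$-parts land in the rank-$2$ bundle $T^{1,0}\W$---bounds the rank by the complex dimension $2$.

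The main obstacle is the analytic input in the first step: establishing hyperbolicity of $\W$ and, through it, both the compactness of $\Gamma_f$ (ruling out compactly divergent subsequences without a priori completeness of $\W$) and the Lie group structure of $\mathrm{Aut}(\W)$. Once these are secured the group-theoretic conclusions are routine, with the upper bound $k\le 2$ the only remaining point where the two-dimensionality and hyperbolicity of $\W\subset\P^2$ enter essentially.
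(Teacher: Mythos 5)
First, a point of comparison: the paper itself gives no proof of this proposition --- it is quoted from Forn\ae ss--Sibony \cite{FS} --- so your attempt has to be measured against the known argument rather than anything in the text. Your overall architecture is the standard one (Ueda's hyperbolicity of $\W$ \cite{Ueda}, hence $\mathrm{Aut}(\W)$ is a Lie group in the compact-open topology; $\Gamma_f$ a compact abelian subgroup; structure theorem for compact abelian Lie groups), and two of your steps are genuinely fine: the inverse construction $h \circ g = \mathrm{id}$ via $f^{n_{j(i)}} = f^{m_i} \circ f^{p_i}$ works \emph{once} limits of iterates are known to be self-maps of $\W$, and the argument that $k \ge 1$ (no $f^p$ can restrict to the identity on an open set without forcing $f^p = \mathrm{id}$ on $\P^2$, contradicting $\deg f^p \ge 2$) is correct.

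The genuine gap is exactly where you locate the ``main obstacle'': the compactness of $\Gamma_f$. Your inference ``displacement estimate $\Rightarrow$ almost periodic orbit $\Rightarrow$ relatively compact in $\W$'' fails on two counts. The $\e$-almost periods you produce are a tail of the Siegel times $n_j$ (and their sums), which need not form a relatively dense set, so the orbit is not Bohr almost periodic and total boundedness does not follow: the estimate $d_\W(f^{n+n_j}(x), f^n(x)) \le d_\W(f^{n_j}(x),x)$ is perfectly compatible with a slow drift of the orbit toward $\partial\W$, since iterating it only gives $d_\W(f^{n+kn_j}(x), f^n(x)) \le k\,d_\W(f^{n_j}(x),x)$. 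Worse, even if the orbit were $d_\W$-totally bounded, relative compactness in $\W$ would require $(\W, d_\W)$ to be complete --- and completeness (or tautness) of precisely these Fatou components is the open problem this paper addresses: Theorems \ref{basincomplete} and \ref{Sigmacomplete} prove completeness only in cases (1) and (2) of the classification, and Theorem \ref{Siegeltaut} handles Siegel domains only when $k=2$, while the $\mathbb{T}^1$ case is left open. So boundedness in the Kobayashi metric does not by itself prevent limit maps from taking values in $\partial\W$; ruling that out is the analytic heart of the Forn\ae ss--Sibony proof and needs real input (for instance, Montel compactness of the lifted iterates via Ueda's bounded sections into $\partial\mathcal{A}$, Theorem \ref{BoundaryA}, in the spirit of Section 2). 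A secondary gap is the bound $k \le 2$: as stated, observing that the $(1,0)$-parts of the $k$ generating fields are sections of the rank-$2$ bundle $T^{1,0}\W$ proves nothing, since $k$ sections of a rank-$2$ bundle can be pointwise dependent over $\C$ for any $k$, and over $\R$ the tangent bundle has rank $4$, so naive counting only gives $k \le 4$. The correct mechanism is that torus orbits must be totally real: a nontrivial element $\alpha + i\beta$ of the kernel of the evaluation $\mathfrak{t} \otimes \C \to T_p^{1,0}\W$ leads, after promoting the pointwise relation $JX_\beta = -X_\alpha$ to a relation among complete commuting fields, to a nonconstant holomorphic orbit map $\C \to \W$, contradicting hyperbolicity --- and one must work at a principal orbit, using effectiveness, to make this pointwise-to-global promotion legitimate. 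You gesture at this but do not supply it, and without it the rank bound does not follow.
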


Using the notation of the above proposition, we will prove in Theorem \ref{Siegeltaut} that $\W$ is taut when $k = 2$.

For two classes of Fatou components we can prove a stronger property than tautness, namely Kobayashi completeness. Ueda proved in \cite{Ueda} that any Fatou component is Kobayashi Hyperbolic (and even Caratheodory Hyperbolic). A hyperbolic complex manifold is called \emph{Kobayashi complete} if the hyperbolic metric is complete (that is, every Cauchy sequence converges). We will prove in Theorems \ref{basincomplete} and \ref{Sigmacomplete} that a Fatou component that is pre-periodic to case \ref{case1} or case \ref{case2} of Theorem \ref{classification} is Kobayashi complete.

Our organization is as follows. In section (2) we recall some background definitions and prove the tautness of Fatou components with an attracting Riemann surface (Theorem \ref{Sigmataut}). In Section (3) we prove that Fatou components that are either basins of attraction or have an attracting Riemann surface are Kobayashi complete (Theorems \ref{basincomplete} and \ref{Sigmacomplete}). In Section (4) we prove tautness for Siegel domains with a $\mathbb{T}^2$-action (Theorem \ref{Siegeltaut}). In Section (6) we study tautness for recurrent Fatou components of H{\'e}non mappings.

\end{section}

\begin{section}{Tautness for an attracting Riemann surface}

Let $\Omega$ be an invariant Fatou component that has an attracting Riemann surface (case \ref{case2} in Theorem \ref{classification}). In this case, we will prove that $\W$ is taut. We will first recall part of the argument that Weickert used to prove that if $ \W $ is an attracting basin, then $ \W $ is taut, \cite{Weickert}.

Let $f: \P^n \to \P^n $ be a holomorphic map of degree $d$. Then there exists a lift $ F : \C^{n+1} \sm \set{0} \to \C^{n+1} \sm \set{0} $ so that the following diagram commutes, where $ \pi $ is the projection from $ \C^{n+1} \sm \set{0} $ to $ \P^n $, see \cite{FSDiagram}.

\begin{equation}\label{CommutativeDiagram}
\xymatrix{
\C^{n+1} \sm \set{0} \ar[d]^{\pi} \ar[r]^{F}
&\C^{n+1} \sm \set{0} \ar[d]^{\pi} \\
\P^n \ar[r]_{f}
& \P^n
}
\end{equation}

The map $F$ is of the form $(F_1, F_2,..., F_{n+1}) $, where the $F_i$'s are homogeneous polynomials of degree $d$ whose only common zero is $(0,0,0)$. We say $F$ is homogeneous and non-degenerate. If $ d \ge 2 $ it is clear that $F$ has a basin of attraction $ \mathcal{A}$ at the origin. By definition

\begin{equation*}
\mathcal{A} = \set{z \in \C^{n+1} \setminus \{0\} \mid \lim_{n \to \infty} F^n(z) = 0}.
\end{equation*}

It is clear that $ \mathcal{A} $ contains a neighborhood of the origin. Since $F$ is non-degenerate, the basin $\mathcal{A}$ is bounded and by \cite{Ueda} we know the following.

\begin{thm}[Ueda] \label{BoundaryA}
Let $ f : \P^n \to \P^n $. For every point $p$ in $ \mathcal{F}(f) $, the Fatou set of $f$, there exists a neighborhood $V$ of $p$ and a holomorphic map $ s : V \to \C^{n+1} \sm \set{0} $ such that $ \pi \circ s = id $ and $ s(V) \subset \partial \mathcal{A} $. Such a map is unique up to a constant factor of unit length.
\end{thm}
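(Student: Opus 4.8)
The plan is to realize $\p\mathcal{A}$ as a level set of the dynamical Green function of $F$ and then to build the section by producing a holomorphic potential for a pluriharmonic function, a problem that turns out to be unobstructed precisely over the Fatou set. First I would introduce the Green function of the lift,
\[
G(z) = \lim_{n\to\infty}\frac{1}{d^n}\log\norm{F^n(z)}.
\]
Since $F$ is non-degenerate there are constants $0<c\le C$ with $c\norm{z}^d\le\norm{F(z)}\le C\norm{z}^d$, and the telescoping estimate $\abs{\frac{1}{d^{n+1}}\log\norm{F^{n+1}}-\frac{1}{d^n}\log\norm{F^n}}\le \text{const}/d^n$ shows that the limit exists, is continuous, and is attained locally uniformly on $\C^{n+1}\sm\set{0}$. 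Homogeneity of $F$ gives $G(\lambda z)=\log\abs{\lambda}+G(z)$ and $G\of F=d\,G$. I would then verify the basin characterization $\mathcal{A}=\set{G<0}$: the lower contraction estimate near $0$ forces $\frac{1}{d^n}\log\norm{F^n(z)}$ to be eventually negative and to decay geometrically when $F^n(z)\to 0$, so $G<0$ on $\mathcal{A}$; conversely $G(z)<0$ makes $\norm{F^n(z)}$ decay doubly exponentially, so $z\in\mathcal{A}$. As $G$ is continuous and $\lambda\mapsto G(\lambda z)$ is strictly increasing in $\abs{\lambda}$, it follows that $\p\mathcal{A}=\set{G=0}$.

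The heart of the matter, and what I expect to be the main obstacle, is that $G$ is \emph{pluriharmonic} on $\pi^{-1}\br{\mathcal{F}(f)}$, not merely plurisubharmonic. Since $\log\norm{\cdot}$ is a potential for the pullback of the Fubini--Study form $\omega$, the currents $dd^c\br{\frac{1}{d^n}\log\norm{F^n}}$ descend, modulo the fixed form $\omega$, to $\frac{1}{d^n}(f^n)^*\omega$, whence $dd^cG=\pi^*T$ for the Green current $T=\lim\frac{1}{d^n}(f^n)^*\omega$. Over a Fatou point the family $\set{f^n}$ is normal; since the target $\P^n$ is compact, every subsequence has a sub-subsequence $f^{n_j}\to h$ converging locally uniformly to a holomorphic map. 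Then $(f^{n_j})^*\omega\to h^*\omega$ has locally bounded mass, so after division by $d^{n_j}\to\infty$ the masses tend to $0$ and $T$ vanishes on $\mathcal{F}(f)$. Hence $dd^cG=0$, i.e.\ $G$ is pluriharmonic, over $\pi^{-1}\br{\mathcal{F}(f)}$.

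Granting this, the construction is routine. Given $p\in\mathcal{F}(f)$, choose a simply connected neighbourhood $V$ together with a holomorphic lift $z_0:V\to\C^{n+1}\sm\set{0}$ of the identity (in an affine chart $z_0$ is the obvious map $(\,\cdot\,,1)$). The function $u=G\of z_0$ is pluriharmonic on $V$, so on the simply connected $V$ it admits a holomorphic potential $u=\re h$, with $h\in\mathcal{O}(V)$ unique up to an additive imaginary constant. Setting $s=e^{-h}\,z_0$ yields a holomorphic, nowhere-vanishing section with $\pi\of s=\mathrm{id}$ and
\[
G\of s=\log\abs{e^{-h}}+G\of z_0=-\re h+u=0,
\]
so $s(V)\subset\set{G=0}=\p\mathcal{A}$, as required.

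Finally, uniqueness follows from the same level-set description together with homogeneity. If $s_1,s_2$ are two such sections, then $s_2=g\,s_1$ for a holomorphic $g:V\to\C^*$ (form the ratio in any nonvanishing coordinate), and $0=G\of s_2=\log\abs{g}+G\of s_1=\log\abs{g}$ forces $\abs{g}\equiv 1$. A holomorphic function of constant modulus on a connected set is constant, so $s_2=c\,s_1$ with $\abs{c}=1$.
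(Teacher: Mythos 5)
The paper itself contains no proof of this statement: it is quoted as background from Ueda's paper \cite{Ueda}, so there is no internal argument to compare against. Your proposal is a correct and essentially self-contained proof, and it follows the standard line of reasoning (due to Ueda, and to Hubbard--Papadopol and Forn\ae ss--Sibony for the Green function machinery): the telescoping bound is valid globally on $\C^{n+1}\sm\set{0}$ because non-degeneracy gives $c=\min_{\norm{z}=1}\norm{F(z)}>0$; the identifications $\mathcal{A}=\set{G<0}$ and $\partial\mathcal{A}=\set{G=0}$ are right (the logarithmic homogeneity $G(\lambda z)=\log\abs{\lambda}+G(z)$ is exactly what rules out interior points of $\set{G=0}$ meeting $\partial\mathcal{A}$); the crux, pluriharmonicity of $G$ over $\pi^{-1}(\mathcal{F}(f))$, is the easy direction of Ueda's theorem that $\mathcal{F}(f)$ is the complement of $\supp T$, and your normal-families argument for it is sound --- local uniform convergence $f^{n_j}\to h$ upgrades to $C^1$ convergence by Cauchy estimates, so $(f^{n_j})^*\omega\to h^*\omega$ with locally bounded mass, and since the full limit $T$ exists (the potentials converge uniformly), vanishing along subsequences gives $T=0$ there. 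Three small points to tighten. First, the phrase ``descend, modulo the fixed form $\omega$'' is misleading: the identity is exact, $dd^c\bigl(d^{-n}\log\norm{F^n}\bigr)=\pi^*\bigl(d^{-n}(f^n)^*\omega\bigr)$, with no correction term, precisely because $\log\norm{\cdot}$ is a potential for $\pi^*\omega$ and $\pi\of F^n=f^n\of\pi$. Second, you should state the standing hypothesis $d\ge 2$ (as the paper does in context): it is needed both for $\mathcal{A}$ to be an attracting basin and for $d^{n_j}\to\infty$ in the mass estimate. Third, in the uniqueness clause take $V$ connected (any neighborhood can be shrunk to a connected one); your open-mapping argument for ``holomorphic of constant modulus is constant'' needs it. None of these affects the validity of the argument.
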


Theorem \ref{BoundaryA} shows that $ \phi \in \mathcal{O} (\Delta, \W) $ lifts to $ s \circ \phi \in \mathcal{O}(\Delta, \partial \mathcal{A}) $.

\begin{thm}\label{Sigmataut}
Let $f$ be a holomorphic self-map of $ \P^2 $ of degree $ d \ge 2 $, and assume that $ \W $ is a preperiodic recurrent Fatou component with an attracting submanifold $ \Sigma $. Then $ \W $ is taut.
\end{thm}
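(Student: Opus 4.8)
The plan is to follow Weickert's strategy of lifting the problem to the bounded set $\partial\mathcal{A}$, where convergence comes for free, and then to use the attracting dynamics toward $\Sigma$ to identify the limit. First I would lift maps: by Theorem~\ref{BoundaryA} every $\phi\in\mathcal{O}(\Delta,\W)$ has, near each point, a lift $s\circ\phi$ into $\partial\mathcal{A}$, and since two such local lifts differ by a constant of unit modulus and $\Delta$ is simply connected, they patch to a global lift $\tilde\phi\in\mathcal{O}(\Delta,\partial\mathcal{A})$ with $\pi\circ\tilde\phi=\phi$. Given a sequence $(\phi_n)\subset\mathcal{O}(\Delta,\W)$, I would lift it to $(\tilde\phi_n)\subset\mathcal{O}(\Delta,\partial\mathcal{A})$. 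Because $F$ is non-degenerate, $\mathcal{A}$ is bounded, so $\partial\mathcal{A}$ is compact and, as the origin lies in the interior of $\mathcal{A}$, bounded away from $0$. Montel's theorem then gives a subsequence $\tilde\phi_n\to\psi$ uniformly on compacts with $\psi(\Delta)\subset\partial\mathcal{A}$; in particular $\psi$ never vanishes, so $\phi_n=\pi\circ\tilde\phi_n\to\pi\circ\psi=:\phi$ uniformly on compacts in $\P^2$, with $\phi(\Delta)\subset\overline{\W}$. The gain of this step is genuine convergence in $\P^2$, which one cannot extract directly since $\P^2$ is not hyperbolic and $\mathcal{O}(\Delta,\P^2)$ is not normal.

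Next I would reduce tautness to a dichotomy for $\phi$. If $\phi(\Delta)\subset\W$, the subsequence converges in $\mathcal{O}(\Delta,\W)$. If instead $\phi(\Delta)\cap\W=\emptyset$, then $\phi(\Delta)\subset\partial\W$ is compact and has positive distance to any compact $L\subset\W$, so uniform convergence forces $\phi_n(K)\cap L=\emptyset$ eventually for every compact $K\subset\Delta$, and the subsequence is compactly divergent. Hence $\W$ is taut once one excludes the mixed case in which $A:=\phi^{-1}(\W)$ is a nonempty proper open subset of $\Delta$ with some $z_0\in\partial A\cap\Delta$ satisfying $\phi(z_0)\in\partial\W\subset\mathcal{J}$.

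Excluding the mixed case is the main obstacle, and it is here that the hypotheses of case~\ref{case2} enter. Let $\W_0$ be the invariant component to which $\W$ is preperiodic, carrying the attracting surface $\Sigma$; since Ueda's lifting is available on all of the Fatou set, the previous steps apply verbatim to $\W$, and it suffices to analyse the dynamics of $\W_0$. Using that $f|_\Sigma$ is an irrational rotation, I would choose iterates with $f^{m_j}|_\Sigma\to\mathrm{id}$, along which $f^{m_j}$ converges on $\W_0$ to a holomorphic retraction $\rho:\W_0\to\Sigma$. The lifts $F^{m_j}\circ\psi$ remain in the bounded set $\partial\mathcal{A}$, so after a further subsequence $F^{m_j}\circ\psi\to\Psi$ and $\Phi:=\pi\circ\Psi$ satisfies $\Phi|_A=\rho\circ\phi|_A$, hence $\Phi(A)\subset\Sigma$. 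As $\Sigma$ is one dimensional, the identity principle confines $\Phi(\Delta)$ to $\overline{\Sigma}$, while $\Phi(z_0)=\lim_j f^{m_j}(\phi(z_0))\in\mathcal{J}$ forces $\Phi(z_0)\in\overline{\Sigma}\setminus\Sigma$. The crux is then to derive a contradiction from the existence of a nonconstant holomorphic disc in $\overline{\Sigma}$ that lies in $\Sigma$ over the open set $A$ yet reaches $\partial\Sigma=\overline{\Sigma}\setminus\Sigma$ at the interior point $z_0$. I expect this to follow from the completeness of the hyperbolic metric of $\Sigma$ (which is a disc, a punctured disc, or an annulus) transported through the retraction $\rho$, and making this precise is the hard part; it is closely tied to, and should be compared with, the Kobayashi completeness of $\W$ proved later in Theorem~\ref{Sigmacomplete}. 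This is exactly the feature that distinguishes case~\ref{case2} from the non-taut Fatou--Bieberbach examples in $\C^n$.
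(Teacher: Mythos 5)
Your opening steps reproduce the paper's argument faithfully: the Ueda lift of discs into the bounded set $\partial\mathcal{A}$, Montel, the limit $\phi\in\mathcal{O}(\Delta,\overline{\W})$, the reduction of tautness to excluding the mixed case $\emptyset\neq A=\phi^{-1}(\W)\subsetneq\Delta$, the push-forward along a subsequence $f^{m_j}\to\rho$ using invariance of $\partial\mathcal{A}$, and the use of total invariance of the Julia set to place $\Phi(z_0)$ outside $\W$. But the crux, which you explicitly leave open, is a genuine gap, and the route you sketch for it would fail on two counts. First, the claim that the identity principle confines $\Phi(\Delta)$ to $\overline{\Sigma}$ is unjustified: $\Sigma$ is closed in $\W$ but not in $\P^2$, so local defining functions for $\Sigma$ exist only over $\W$, and the vanishing of $\sigma\circ\Phi$ propagates only within the connected component of $\Phi^{-1}(\W)$ containing $A$; it says nothing across $\Phi^{-1}(\partial\W)$, which could a priori even have interior. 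Second, and decisively, completeness of the hyperbolic metric of $\Sigma$ cannot yield the contradiction you want: as $z\to z_0$ inside $A$, the source distance $d_A(a,z)$ itself tends to infinity, since each component of the open set $A\subset\Delta$ is a hyperbolic Riemann surface, hence complete, with closed balls compact. The distance-decreasing inequality $d_\Sigma(\Phi(a),\Phi(z))\le d_A(a,z)$ therefore compares two quantities that both blow up and excludes nothing; a single holomorphic disc lying in $\Sigma$ over $A$ and reaching $\overline{\Sigma}\setminus\Sigma$ at an interior point of $\Delta$ is not absurd on metric grounds alone.

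The idea you are missing, which is the paper's actual mechanism, is to push forward the approximating discs rather than only the limit disc. Each $\phi_n$ maps $\Delta$ into $\W$ itself, not merely $\overline{\W}$, so with $h=\lim_j f^{n_j}:\W\to\Sigma$ one gets $f^{n_j}\circ\phi_n\to h\circ\phi_n=:h_n$, and $h_n\in\mathcal{O}(\Delta,\Sigma)$ is an honest disc in $\Sigma$. Now use tautness of $\Sigma$ (a disc or an annulus), not completeness: the family $\{h_n\}$ is either compactly divergent or has a convergent subsequence in $\mathcal{O}(\Delta,\Sigma)$. Compact divergence of $\{h_n\}$ in $\Sigma$, which is closed in $\W$, would force compact divergence of $\{\phi_n\}$, contrary to assumption; this is witnessed at a point $\zeta$ with $\phi(\zeta)\in\W$, near which the $\phi_n$ eventually map a neighborhood $V$ into a fixed compact subset of $\W$ on which $f^{n_j}$ converges uniformly. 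Hence a subsequence of $h_n$ converges in $\mathcal{O}(\Delta,\Sigma)$, and its limit agrees with your $\Phi$ on $V$, hence on all of $\Delta$ by the identity principle; so $\Phi(\Delta)\subset\Sigma\subset\W$. Your Julia-set observation, applied now at every point of $\Delta$ rather than only at $z_0$, then gives $\phi(\Delta)\subset\W$, so the mixed case never occurs and the chosen subsequence of $(\phi_n)$ converges in $\mathcal{O}(\Delta,\W)$. In short, the paper closes the argument with tautness of $\Sigma$ combined with the non-compact-divergence hypothesis transferred to the discs $h_n$; completeness of $\Sigma$ plays a role only later, in the proof of Theorem \ref{Sigmacomplete}.
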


\begin{proof}
It is enough to consider the case $ f: \W \to \W $. Recall from Theorem \ref{classification} of Forn\ae ss and Sibony that we can find a subsequence $ (f^{n_k}) $ of $ (f^n) $ that converges uniformly on compact sets of $\W$ to a map $h: \Omega \rightarrow \Sigma$.

Let a sequence $ (g_i) \subset \mathcal{O}(\Delta,\W) $ be given. Let us assume that $ (g_i) $ is not compactly divergent, so our goal is to prove that there exists a convergent subsequence. Let $F$ denote a lift of $f$ to $ \C^3 \sm \set{0} $ and let $ \mathcal{A} $ be the attracting basin at the origin of $F$. By Theorem \ref{BoundaryA} each $ g_i $ lifts to a map $ \tilde{g_i} : \Delta \to \partial \mathcal{A} $.

Since $\partial \mathcal{A}$ is bounded in $\C^n$ and closed, we can restrict to a subsequence, which for simplicity of notation we will also call $ (\tilde{g_i}) $, that converges uniformly on compacts to a map $\tilde{g} \in \mathcal{O}(\Delta,\partial \mathcal{A})$. Since $ \partial \mathcal{A} $ does not include the origin, we can define $ g = \pi \circ \tilde{g} $ and we obtain

\begin{equation}
\lim_{i \to \infty} g_i = \lim_{i \to \infty} \pi \circ \tilde{g_i} = \pi \circ \tilde{g} = g,
\end{equation}

and this convergence is uniform on compacts, so $ g \in \mathcal{O} (\Delta, \overline{\W}) $.

Now consider the images of $g_i$ and $g$ under $f^n$. Using the lifts to $\C^3\setminus \{0\}$ as above we can restrict to a subsequence of $n_j$ if needed such that the maps $f^{n_j} \circ g \in \mathcal{O}(\Delta, \bar{\W})$ converge to a map $k \in \mathcal{O}(\Delta, \bar{\W})$. Similarly we can assume that for each $i$ the maps $f^{n_j} \circ g_i \in \mathcal{O}(\Delta, \W)$ converge to $h_i \in \mathcal{O}(\Delta, \Sigma)$. By further restricting to a subsequence if necessary we may assume that $h_i$ converges to $h \in \mathcal{O}(\Delta, \bar{\Sigma})$.

To summarize we have

\begin{equation}
k = \lim_{j \to \infty} \pi \circ F^{n_j} \circ \tilde{g} = \lim_{j \to \infty} f^{n_j} \circ \pi \circ \tilde{g} = \lim_{j \to \infty} f^{n_j} \circ g,
\end{equation}

and

\begin{equation}
h_i = \lim_{j \to \infty} \pi \circ F^{n_j} \circ \tilde{g_i} = \lim_{j \to \infty} f^{n_j} \circ \pi \circ \tilde{g_i} = \lim_{j \to \infty} f^{n_j} \circ g_i.
\end{equation}

By our assumption that $g_i$ is not compactly divergent there exists $ \zeta \in \Delta $ so that $ g(\zeta) \in \W $. Take a neighborhood $ U \subset \W $ containing $ g(\zeta) = z $ so that $f^{n_j}$ converges uniformly on $U$. Then there exists a neighborhood $V \subset \Delta$ of $\zeta$ such that $g_i(V) \subset U$ for all $i$ large enough. By the definitions of $h$ and $k$ it follows immediately that $h = k$ on $V$, and since the two maps are holomorphic we have that $h = k$ on the whole disc $\Delta$.

Now let us consider the sequence of discs $h_i \in \mathcal{O}(\Delta, \Sigma)$. As was shown in \cite{FS} and \cite{Ueda2} the Riemann surface $\Sigma$ must be biholomorphically equivalent to either the unit disc or to the annulus, and both are taut. Therefore the family $\{h_i\} \subset \mathcal{O}(\Delta, \Sigma)$ is either compactly divergent or admits a convergent subsequence. But since $\Sigma$ is a closed submanifold of $\W$, if the family is compactly divergent in $\Sigma$ it must also be compactly divergent in $\W$, which would imply that the maps $g_i$ were already compactly divergent, which contradicts our assumptions. Therefore we have that $\{h_i\}$ admits a normal family and the limit $h$ maps $\Delta$ into $\Sigma$. Since $h =k$ it follows that $g$ maps $ \Delta $ into $ \W $ which concludes the argument.
\end{proof}

\end{section}

\begin{section}{Kobayashi Completeness}

Barth proved that on hyperbolic complex spaces the topology induced by the Kobayashi distance is the same as the standard topology, see \cite{Barth}.

\begin{thm}[Barth]
Let $X$ be a connected complex space. Then the Kobayashi distance is continuous. If $ B_K (p,r) $ denotes the open Kobayashi ball of radius $r$ around $ p \in X $, then $ B_K (p,r) $ is path-connected in $X$. If $X$ is Kobayashi hyperbolic, then the Kobayashi distance induces the standard topology on $X$. In fact, for any open neighborhood $U$ in the standard topology and any point $ p \in U $ there is a Kobayashi ball around $p$ that is compactly contained in $U$.
\end{thm}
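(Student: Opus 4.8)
The plan is to prove the three assertions in order---continuity, path-connectedness of balls, and the comparison of topologies---so that each rests on the previous ones. Recall that $d_X(p,q)$ is the infimum, over all chains $p = x_0,\dots,x_m = q$ and analytic discs $\phi_i \in \mathcal{O}(\Delta,X)$ with $\phi_i(a_i) = x_{i-1}$ and $\phi_i(b_i) = x_i$, of $\sum_i \rho(a_i,b_i)$, where $\rho$ is the Poincar\'e distance on $\Delta$; since $X$ is connected and locally connected by analytic discs, this infimum is finite. For continuity, the triangle inequality gives $\abs{d_X(p,q) - d_X(p',q')} \le d_X(p,p') + d_X(q,q')$, so it suffices to prove the local smallness property $d_X(p,p') \to 0$ as $p' \to p$ in the standard topology. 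I would pass to a local model, writing a neighborhood of $p$ as an analytic subset $A$ of a polydisc in $\C^N$ with $p$ the origin; as $A$ is biholomorphic to an open subset of $X$, the contraction property gives $d_X(p,p') \le d_A(p,p')$, and one bounds $d_A(p,p')$ by joining $p'$ to $p$ by a short chain of analytic discs inside $A$. I expect this last estimate to be the main obstacle: on a smooth chart it reduces to the explicit Kobayashi distance on a Euclidean ball, but at a singular point one must genuinely produce the connecting discs, for which the decomposition of $A$ into irreducible branches together with the local parametrization (or normalization) of each branch is the natural tool, yielding a chain whose Poincar\'e length is controlled by $\norm{p'}$.

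Granting continuity, path-connectedness of $B_K(p,r)$ follows directly from the chain definition and in fact needs no continuity. Given $q$ with $d_X(p,q) < r$, I would choose a chain of analytic discs from $p$ to $q$ of total Poincar\'e length still less than $r$, and concatenate the images under the $\phi_i$ of the geodesic segments from $a_i$ to $b_i$; this is a continuous path from $p$ to $q$ in $X$. For any point $x = \phi_i(t)$ on the $i$-th arc a sub-chain realizes $d_X(p,x) \le \sum_{j<i}\rho(a_j,b_j) + \rho(a_i,t) < r$, so the whole path lies in $B_K(p,r)$, which is therefore path-connected.

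Finally, assume $X$ is Kobayashi hyperbolic, so that $d_X$ is a genuine distance. Continuity already shows each $B_K(p,r)$ is open in the standard topology, hence the metric topology is coarser than the standard one. For the reverse inclusion let $U$ be a standard-open neighborhood of $p$, and choose a smaller neighborhood $V$ modeled on a Euclidean ball in a local chart, so that $\cl V$ is compact, $\cl V \subset U$, $\partial V$ is compact, and $\operatorname{int}\cl V = V$. The continuous function $q \mapsto d_X(p,q)$ is strictly positive on $\partial V$ by hyperbolicity (as $p \notin \partial V$), so it attains a minimum $m > 0$ there. By the previous step $B_K(p,m)$ is path-connected, contains $p \in V$, and is disjoint from $\partial V$; since $X \sm \partial V = (X \sm \cl V) \sqcup V$ splits into two disjoint open sets, a connected set meeting $V$ and avoiding $\partial V$ must lie in $V$, so $B_K(p,m) \subseteq V \subseteq U$ and $\cl{B_K(p,m/2)}$ is compactly contained in $U$. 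Hence the standard topology is coarser than the metric topology as well, and the two coincide. Hyperbolicity enters only through the positivity $m > 0$; everything else is formal once the local smallness estimate of the first step is available, which is why I regard that estimate as the crux of the statement.
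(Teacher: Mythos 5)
The paper does not prove this statement at all---it is quoted verbatim as Barth's theorem with the citation \cite{Barth}, so the only comparison available is with the classical argument, and yours is in substance that argument. Your second and third steps are complete and correct: the concatenation of geodesic arcs through a chain of total length less than $r$ does give path-connectedness of $B_K(p,r)$ with no appeal to continuity, and the separation trick---$m = \min_{\partial V} d_X(p,\cdot) > 0$ by hyperbolicity and compactness of $\partial V$, then $X \sm \partial V = V \sqcup (X \sm \cl{V})$ forces the connected ball $B_K(p,m)$ into $V$---is exactly Barth's mechanism for the topology comparison. One small simplification: your hypothesis $\operatorname{int} \cl{V} = V$ is unnecessary, since for any open $V$ one has $\cl{V} = V \cup \partial V$, so the splitting of $X \sm \partial V$ into the two disjoint open sets $V$ and $X \sm \cl{V}$ holds for any $V$ with compact closure inside $U$, which local compactness of a complex space always provides.

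The one genuine gap is at the point you yourself flagged, and the specific tool you name there does not work as stated: for a branch of dimension at least two, passing to the normalization does not reduce to the smooth case, because a normal complex space can still be singular---proving continuity of the Kobayashi pseudodistance on the normalization is exactly as hard as on $A$, so the argument would be circular. The standard repair is to slice down to curves rather than normalize the branch: note first that $p'$ near $p$ lies on one of the finitely many irreducible local branches through $p$; by the local parametrization theorem choose a finite branched cover $\pi \colon A \to \Delta^k$ of a small representative of that branch with $\pi^{-1}(0) = \set{p}$; pull back the complex line through $0$ and $\pi(p')$ to obtain a one-dimensional analytic set $C \ni p'$, each of whose irreducible components is finite and proper over the line and hence contains a point of the fiber $\pi^{-1}(0)$, i.e.\ contains $p$; then a Puiseux parametrization of the branch of $C$ through $p'$ produces honest analytic discs joining $p'$ to $p$. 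Two caveats remain even then: the radius of these Puiseux discs must be bounded below uniformly over the family of lines (this uniformity, controlled by the bounded covering degree, is the real content of the lemma), and the resulting modulus of continuity is H\"older, of order $\norm{p'}^{1/\mu}$ with $\mu$ bounding the Puiseux exponents, not ``controlled by $\norm{p'}$'' linearly as you wrote---though since only $d_A(p,p') \to 0$ is needed, this discrepancy is harmless. With that lemma inserted, your proof is complete and coincides with the argument in \cite{Barth} and in Kobayashi's book.
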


\begin{defn}
A hyperbolic complex manifold $ D \subset \C^n $ is complete if every Kobayashi Cauchy sequence of points $ (z_i) $ with $ z_i \in D $ converges in the usual topology to a point $ z \in D $.
\end{defn}

\begin{thm}\label{basincomplete}
Let $f$ be a holomorphic self-map of $\P^n$ of degree at least $2$. Let $\W$ be a Fatou component of $f$ which is preperiodic to a basin of attraction, with attracting fixed point $ p \in \W $. Then $\W$ is Kobayashi complete.
\end{thm}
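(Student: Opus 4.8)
The plan is to reduce completeness to relative compactness of Kobayashi balls and then exploit the fact that in a basin of attraction the iterates converge to the single interior point $p$. First I would recall that a hyperbolic manifold is complete as soon as every closed Kobayashi ball $\overline{B_K(p,r)}$ is relatively compact: a Kobayashi--Cauchy sequence lies in a single such ball, and by Barth's theorem the Kobayashi topology coincides with the standard one, so a convergent subsequence forces convergence of the whole sequence. If $\W$ is merely preperiodic, an iterate $f^m$ maps $\W$ properly onto an invariant basin, and properness together with the distance-decreasing property of $f^m$ transports completeness back to $\W$; hence I may assume $f(\W)=\W$ and $f^n\to p$ locally uniformly on $\W$. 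It then suffices to show that $d_K(p,z)\to\infty$ as $z\to\partial\W$. Suppose instead that there are $r>0$ and points $z_i\to q\in\partial\W$ with $d_K(p,z_i)\le r$. Since the Fatou set is open, $\partial\W$ lies in the Julia set $J$ (the complement of the Fatou set), so $q\in J$.

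Next I would connect $p$ to each $z_i$ by a chain of analytic discs in $\W$ of total Kobayashi length at most $r+1$, with consecutive nodes kept at Kobayashi distance at least a fixed $\delta_0>0$, so that the number of discs is bounded by $(r+1)/\delta_0$ uniformly in $i$. Exactly as in the proof of Theorem~\ref{Sigmataut}, I would lift each disc through the Ueda sections of Theorem~\ref{BoundaryA} into $\partial\mathcal A$. Since $\partial\mathcal A$ is bounded and closed, the lifted discs form a normal family, and after passing to a subsequence their projections give a limiting chain $\psi_1,\dots,\psi_M$ of nonconstant analytic discs in $\overline{\W}$, with nodes $p=x_0,x_1,\dots,x_M=q$, each $\psi_l$ running from $x_{l-1}$ to $x_l$ and carrying a lift $\tilde\psi_l:\Delta\to\partial\mathcal A$; each $\psi_l$ is nonconstant because its nodes were kept at distance at least $\delta_0$. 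Producing this limiting chain, and in particular ruling out that some limit disc collapses entirely into $\partial\W$, is the \emph{main obstacle}; it is handled by the same boundedness-and-Montel mechanism as Theorem~\ref{Sigmataut}, now applied node by node along a chain of uniformly bounded length.

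Granting the chain, the contradiction is clean. The set $\mathcal A$ is completely invariant, so $\partial\mathcal A$ is $F$-invariant, and thus the maps $F^{n_j}\circ\tilde\psi_l$ stay in the bounded set $\partial\mathcal A$; after a further subsequence $f^{n_j}\circ\psi_l=\pi\circ F^{n_j}\circ\tilde\psi_l$ therefore converges to some $k_l\in\mathcal O(\Delta,\overline{\W})$. I claim $f^{n_j}(x_l)\to p$ for every $l$, by induction on $l$. This is trivial at $x_0=p$. Assume it at $x_{l-1}$. If $\psi_l$ meets $\W$ in a nonempty open set, then $f^{n_j}\to p$ uniformly on compacts there, so $k_l\equiv p$ by the identity theorem, and evaluating at the endpoint of $\psi_l$ gives $f^{n_j}(x_l)\to p$. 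If instead $\psi_l(\Delta)\subset J$, then already $x_{l-1}\in J$; since $J$ is totally invariant and closed, the inductive hypothesis $f^{n_j}(x_{l-1})\to p$ would force $p\in J$, which is impossible because $p$ is an attracting point in the Fatou set. (The base step is consistent since $\psi_1(0)=p\in\W$, so $\psi_1$ meets $\W$ openly.) Carrying the induction to $l=M$ yields $f^{n_j}(q)\to p$ with $q\in J$, again forcing $p\in J$, a contradiction. Hence $d_K(p,z)\to\infty$ as $z\to\partial\W$, the closed balls $\overline{B_K(p,r)}$ are relatively compact in $\W$, and $\W$ is Kobayashi complete.
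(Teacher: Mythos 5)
There is a genuine gap at the step you yourself flag as the main obstacle, and the proposed fix does not close it. Your contradiction argument needs a \emph{limiting} chain of discs from $p$ to $q$, and for that you need the number of discs in the chains joining $p$ to $z_i$ to be bounded uniformly in $i$. You propose to get this by ``keeping consecutive nodes at Kobayashi distance at least $\delta_0$,'' which would indeed bound the count by $(r+1)/\delta_0$, since the length of a single disc dominates the Kobayashi distance of its endpoints. But you cannot arrange this separation: the Kobayashi distance is an infimum over chains, and a chain of total length at most $r+1$ may consist of arbitrarily many discs of arbitrarily small length. There is no consolidation operation available --- two consecutive analytic discs cannot be merged into one, and a subchain of small total length joining $x_a$ to $x_b$ cannot in general be replaced by a \emph{single} disc of comparable length, because no ``one-disc'' property holds in an arbitrary Fatou component (only locally, in coordinate balls, which does not help when the nodes sit near $\p\W$). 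So the boundedness-and-Montel mechanism of Theorem \ref{Sigmataut}, which works perfectly disc-by-disc after lifting into the bounded set $\p\mathcal{A}$, has nothing to latch onto: with unboundedly many discs per chain no limiting chain with finitely many discs can be extracted, and the induction along the chain never starts. (Two smaller remarks: the nonconstancy of the limit discs is irrelevant --- your identity-theorem step works for constant discs too --- and in any case $\delta_0$-separation would not survive the limit, since Kobayashi-separated points can merge at the boundary. The rest of your argument, including the lifting via Theorem \ref{BoundaryA}, the preperiodic reduction by properness of $f^m:\W\to\W'$, and the induction using total invariance of the Julia set, is sound \emph{conditional} on the chain.)

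The paper's proof is entirely different and avoids this difficulty by never leaving the Cauchy-sequence formulation: given a Kobayashi Cauchy sequence $(z_i)$, choose a small neighborhood $U$ of $p$ and $\e>0$ with the Kobayashi $\e$-neighborhood $\mathcal{N}_\e(U)$ relatively compact in $\W$; pick $I$ with $d_K(z_i,z_j)<\e$ for $i,j\ge I$ and $N$ with $f^N(z_I)\in U$; the distance-decreasing property gives $f^N(z_j)\in\mathcal{N}_\e(U)$ for all $j\ge I$, so the tail of the sequence lies in $f^{-N}(\mathcal{N}_\e(U))\cap\W$, which is relatively compact in $\W$; a convergent subsequence then exists, and Barth's theorem upgrades it to convergence of the whole sequence. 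No lifting to $\p\mathcal{A}$, no chains, no boundary analysis. Your target statement, that $d_K(p,z)\to\infty$ as $z\to\p\W$ (finite compactness of balls), is in the end equivalent to completeness for the inner metric $d_K$ by Hopf--Rinow, so you were proving an equivalent statement by a much harder route; if you want to keep your framework, apply your own first-paragraph reduction directly to the Cauchy sequence, using the dynamics as the paper does, rather than attempting a chain-limit argument at the boundary.
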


\begin{proof}
It is enough to consider an invariant Fatou component. Let $ (z_i) $ be a Kobayashi Cauchy sequence in $\W$. Let $U$ be a small neighborhood of the attracting fixed point $p$ and let $\e>0$ be small enough such that $\mathcal{N}_\e(U)$, the $\e$-neighborhood of $U$ in the Kobayashi-metric, is relatively compact in $\W$. Let $I \in \N$ be such that $d_K(z_i, z_j) < \e$ for any $i, j \ge I$. Since $f^n \rightarrow p$ locally uniformly on $\W$ there exists an $N \in \N$ such that $f^N(z_I) \in U$. Since $d_k(f^N(z_j), f^N(z_I) ) \le d_k(z_j, z_I)$ we have that $z_j \in \mathcal{N}_\e(U)$ for any $j \ge I$. But by continuity of $f$ we have that $K = f^{-N}(\mathcal{N}_\e(U)) \cap \W$ is relatively compact in $\W$. By compactness of $\P^2$ there exists a convergent subsequence $z_{i_j} \rightarrow z$, where $z$ must necessarily lie in $\bar{K} \subset \W$. But since $d_K(z_i, z) \le d_K(z_i, z_{i_j}) + d_K(z_{i_j}, z)$ we have that $d_K(z_i, z) \rightarrow 0$. Since the topology defined by the Kobayashi metric is the same as the usual topology we get that $ (z_i) $ converges to $z$.
\end{proof}

\begin{defn}
A holomorphic retract $ f : U \to U $ is a holomorphic map that satisfies $ f(f(z)) = f(z) $ for all $ z \in U $.
\end{defn}

Let $f$ be a holomorphic endomorphism of $\P^2$ of degree at least $2$. Let $\W$ be an invariant Fatou component of $f$ and assume that all orbits in $\W$ converge to a Riemann surface $\Sigma \in \W$. Since $f$ acts as a rotation on $\Sigma$, there exists a subsequence $\{f^{n_j}\}$ that converges uniformly on compact subsets of $\W$ to $h: \W \rightarrow \Sigma$ and such that $h|_\Sigma = \mathrm{Id}$. This $h$ is a holomorphic retract.

In order to prove Kobayashi completeness in case \ref{case2} of Theorem \ref{classification} we need the following lemma.

\begin{lemma}\label{equalmetric}
Let $ f: U \to V \subset U $ be a holomorphic retract. Then for any points $ x, y \in V $, $ d_K^U (x,y) = d_K^V (x,y) $.

\end{lemma}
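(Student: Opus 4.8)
The plan is to deduce the equality from the standard fact that the Kobayashi distance is non-increasing under holomorphic maps, applied in the two directions given by the inclusion and by the retraction. Recall that if $\phi \in \mathcal{O}(X,Y)$, then $d_K^Y(\phi(p),\phi(q)) \le d_K^X(p,q)$ for all $p,q \in X$, since every chain of analytic discs joining $p$ to $q$ in $X$ is pushed forward by $\phi$ to a chain joining $\phi(p)$ to $\phi(q)$ in $Y$ of the same total length. I would first record that a retract fixes its image pointwise: for $x \in V = f(U)$ we may write $x = f(u)$, whence $f(x) = f(f(u)) = f(u) = x$, so $f|_V = \mathrm{Id}_V$.

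For the inequality $d_K^U(x,y) \le d_K^V(x,y)$ I would apply the distance-decreasing property to the holomorphic inclusion $\iota : V \hookrightarrow U$. Since $\iota$ acts as the identity on points of $V$, this gives $d_K^U(x,y) = d_K^U(\iota(x),\iota(y)) \le d_K^V(x,y)$ for all $x,y \in V$. Note that this direction is valid for any complex submanifold $V \subset U$ and uses nothing about the retraction.

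For the reverse inequality I would use the retraction itself. Viewing $f$ as a holomorphic map $f : U \to V$, the distance-decreasing property yields $d_K^V(f(x),f(y)) \le d_K^U(x,y)$ for all $x,y \in U$. Restricting to $x,y \in V$ and invoking $f|_V = \mathrm{Id}_V$ gives $d_K^V(x,y) \le d_K^U(x,y)$. Combining the two inequalities yields $d_K^U(x,y) = d_K^V(x,y)$, as claimed.

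There is no serious obstacle here; the only points that require care are that $f$ must be read as a holomorphic map \emph{onto} $V$, so that the distance-decreasing property is applied with target manifold $V$ rather than $U$, and that $V$ should be a complex submanifold so that $d_K^V$ is well defined. Both hold in our intended application, where $V = \Sigma$ is an attracting Riemann surface and $f$ is the holomorphic retract $h$ constructed above.
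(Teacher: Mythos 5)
Your proof is correct and follows exactly the paper's argument: the paper likewise obtains $d_K^U(x,y)\le d_K^V(x,y)$ from the holomorphic inclusion $V\hookrightarrow U$ and the reverse inequality from the distance-decreasing property applied to $f:U\to V$. You merely spell out the (implicit) fact that $f|_V=\mathrm{Id}_V$ follows from $f\circ f=f$, which the paper's terser proof takes for granted.
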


\begin{proof}

Since $f$ is holomorphic, using the property that the Kobayashi distance is decreasing under holomorphic maps it is clear that $d_K^U(z,w) \ge d_K^V(z,w)$. On the other hand, by inclusion $d_K^U(z,w) \le d_K^V(z,w)$.

\end{proof}

We can now prove the following:

\begin{thm}\label{Sigmacomplete}
Let $f$ be a holomorphic endomorphism of $\P^2$ of degree at least $2$. Let $\W$ be an invariant Fatou component of $f$ and assume that all orbits in $\W$ converge to a Riemann surface $\Sigma \in \W$. Then $\W$ is Kobayashi complete.
\end{thm}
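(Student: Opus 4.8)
The plan is to mirror the proof of Theorem \ref{basincomplete}, replacing the attracting fixed point $p$ by the attracting Riemann surface $\Sigma$ and the iterates $f^n$ by the subsequence $f^{n_j}$ converging to the holomorphic retraction $h \colon \W \to \Sigma$. Let $(z_i)$ be a Kobayashi Cauchy sequence in $\W$; I want to show that it converges to a point of $\W$.

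First I would locate the limit inside $\Sigma$. Since $h$ is holomorphic it is distance-decreasing, so $d_K^\W(h(z_i), h(z_j)) \le d_K^\W(z_i, z_j) \to 0$ and $(h(z_i))$ is Kobayashi Cauchy in $\W$. As the points $h(z_i)$ lie in $\Sigma$, Lemma \ref{equalmetric} applied to the retract $h$ gives $d_K^\Sigma(h(z_i), h(z_j)) = d_K^\W(h(z_i), h(z_j))$, so that $(h(z_i))$ is Cauchy for the intrinsic metric of $\Sigma$. As recalled in the proof of Theorem \ref{Sigmataut}, $\Sigma$ is biholomorphic to the unit disc or to an annulus --- crucially \emph{not} to a punctured disc --- and both of these are Kobayashi complete. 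Therefore $h(z_i) \to w$ for some $w \in \Sigma \subset \W$.

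Next I would use the dynamics to push the tail of $(z_i)$ into a relatively compact set. Choose a relatively compact neighborhood $U$ of $w$ in $\W$ and, using Barth's theorem, an $\e > 0$ so small that $\mathcal{N}_\e(U)$ is relatively compact in $\W$. Pick $I$ large enough that $d_K^\W(z_i, z_j) < \e$ for all $i, j \ge I$, and, since $h(z_i) \to w \in U$, also that $h(z_I) \in U$. Because $f^{n_j} \to h$ uniformly on compacts, $f^{n_j}(z_I) \to h(z_I) \in U$, so I may fix one index $J$ with $f^{n_J}(z_I) \in U$. For every $i \ge I$ the distance-decreasing property yields $d_K^\W(f^{n_J}(z_i), f^{n_J}(z_I)) \le d_K^\W(z_i, z_I) < \e$, hence $f^{n_J}(z_i) \in \mathcal{N}_\e(U)$ and $z_i \in K := f^{-n_J}(\mathcal{N}_\e(U)) \cap \W$.

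It remains to check that $K$ is relatively compact in $\W$, and this is the step I expect to require the most care. Since $f$ is a finite, hence proper, map of $\P^2$, the preimage $f^{-n_J}(\overline{\mathcal{N}_\e(U)})$ is compact; and because $\overline{\mathcal{N}_\e(U)} \subset \W$ lies in the Fatou set while $\partial \W$ is contained in the completely invariant Julia set, a boundary point cannot map into $\W$ under $f^{n_J}$, so this preimage meets $\overline{\W}$ only inside $\W$ and $\overline{K} \subset \W$ is compact. By compactness a subsequence $z_{i_k}$ then converges to some $z \in \overline{K} \subset \W$, and the estimate $d_K^\W(z_i, z) \le d_K^\W(z_i, z_{i_k}) + d_K^\W(z_{i_k}, z) \to 0$, together with the coincidence of the Kobayashi and standard topologies (Barth), gives $z_i \to z$ in $\W$. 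The subsidiary point I would verify along the way is the relative compactness of $\mathcal{N}_\e(U)$ for small $\e$, which follows from the continuity of the Kobayashi distance and the fact that it induces the standard topology.
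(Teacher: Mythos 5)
Your proposal is correct and follows essentially the same route as the paper: the distance-decreasing property of the retraction $h$, Lemma \ref{equalmetric}, and the Kobayashi completeness of the disc and annulus force $(h(z_i))$ to converge in $\Sigma$ (the paper phrases this step contrapositively, as a contradiction), after which the argument of Theorem \ref{basincomplete} is run with a single fixed iterate $f^{n_J}$ --- you merely make explicit the relative compactness of $f^{-n_J}(\mathcal{N}_\e(U)) \cap \W$ via complete invariance of the Julia set, which the paper leaves implicit. One harmless aside: your parenthetical that a punctured disc would be \emph{crucially} problematic is inaccurate, since $\D \setminus \{0\}$ is also Kobayashi complete (being holomorphically covered by $\D$), but nothing in your argument depends on this.
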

\begin{proof}
Let the subsequence $ (f^{n_j}) $ converge to a map $h: \W \rightarrow \Sigma$, uniformly on compact subsets of $\W$. Suppose that $ (z_i) $ is a Kobayashi Cauchy sequence in $ \W $. Let $w_i = h(z_i)$. If $\{w_i\}$ lies in a relatively compact subset of $\Sigma$ then we can apply the same argument as in the proof of Theorem  \ref{basincomplete} to obtain that $ (z_i) $ converges to a point $z \in \W$. Let us therefore assume, for the purpose of contradiction, that $\{w_i\}$ is not relatively compact in $\Sigma$.

We know from the work of Forn{\ae}ss and Sibony \cite{FS} that $\Sigma$ is a closed invariant submanifold and by Ueda \cite{Ueda2} that $\Sigma$ is biholomorphically equivalent to the disc or an annulus. Since the disc and the annulus are both Kobayashi complete, we get that $ (w_i) $ cannot be a Cauchy sequence in $\Sigma$ with respect to the Kobayashi distance on $\Sigma$. But by Lemma \ref{equalmetric} we conclude that $ (w_i) $ cannot be a Cauchy sequence with respect to the Kobayashi distance on $\W$ either. Since the Kobayashi distance is continuous and $d_K(f^n(z_i), f^n(z_j)) \le d_K(z_i, z_j)$ we have that $d_K(h(z_i), h(z_j)) \le d_K(z_i, z_j)$ as well, and therefore $ (w_i) $ must be a Cauchy sequence by our assumption that $ (z_i) $ is a Cauchy sequence. This contradiction concludes our argument.
\end{proof}

\end{section}

\begin{section}{Siegel domains}

As we mentioned in the introduction (Proposition \ref{Siegel}) it was shown by Forn\ae ss and Sibony that a Siegel disc either admits a $\mathbb{T}^1$ or a $\mathbb{T}^2$ action. In the latter case it follows from the following short argument that the Siegel disc is taut.

\begin{thm}\label{Siegeltaut}
Let $f$ be a holomorphic endomorphism of $\P^2$ of degree at least $2$, such that $f$ has a Siegel domain $\W$. Further assume that $\Gamma_f$ is isomorphic to $\mathbb{T}^2 \times F$. Then $\Omega$ is taut.
\end{thm}
\begin{proof}
Since $\Omega$ is pseudoconvex (\cite{Ueda}, \cite{FSDiagram}) and admits a $2$-torus action, a result by Barrett, Bedford and Katok \cite{BBK} gives that $\Omega$ is biholomorphic to a Reinhardt domain $V$. But then $V$ is a hyperbolic, pseudoconvex Reinhardt domain and therefore taut, see \cite{Zwonek}.
\end{proof}

The Siegel disc with a $\mathbb{T}^1$-action is the only remaining recurrent Fatou component that is not covered yet. It would be interesting to know whether this type of Fatou component must be taut as well.

\end{section}

\begin{section}{Tautness for Fatou components of H{\'e}non mappings}

In \cite{FS} recurrent Fatou components were studied for (generalized) H{\'e}non mappings as well as for Holomorphic mappings of $\P^2$. Forn{\ae}ss and Sibony showed that the classification in Theorem \ref{classification} also holds for H{\'e}non maps. These Fatou components are often not Kobayashi hyperbolic though, for example a basin of attraction $\W$ is always biholomorphic to $\C^2$. For the same reason such a Fatou component cannot be taut, for any $z \in \W$ there will be families of holomorphic discs $\phi_n: \Delta \rightarrow \W$ for which $\phi_n(0) = z$ but such that the set $\{\phi(\frac{1}{2})\}$ is not contained in any bounded subset of $\C^2$. Such a family is not compactly divergent but also clearly does not converge. This example suggests that for unbounded Fatou components in $\C^2$ we should only consider bounded families of holomorphic discs:

\begin{defn}
A domain $\W \subset \C^n$ is called \emph{taut on bounded families} if every bounded family $V \subset \mathcal{O}(\Delta, \W)$ is normal.
\end{defn}

\begin{thm}
Let $\W$ be a recurrent Fatou component for a (generalized) H{\'e}non map $f$. Suppose that $\W$ is either an attracting basin (corresponding to case \ref{case1} in Theorem \ref{classification}) or has an attractive Riemann surface (corresponding to case \ref{case2} in Theorem \ref{classification}). Then $\W$ is taut on bounded families.
\end{thm}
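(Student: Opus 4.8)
The plan is to follow the proof of Theorem~\ref{Sigmataut} almost verbatim, with one structural change dictated by the fact that H\'enon maps act on $\C^2$ rather than $\P^2$. In the $\P^2$ argument the only role of the lift to $\partial\mathcal A$ was to produce a \emph{bounded} family to which Montel's theorem could be applied; here that boundedness is supplied by hypothesis, since we may assume from the outset that the family $V\subset\mathcal O(\Delta,\W)$ is bounded. So I would apply Montel directly in $\C^2$ and never leave $\C^2$. Throughout I use the standard structure of recurrent H\'enon components: $\W\subset K^+$, so that $\overline\W\subset K^+$ lies in the set of points with bounded forward orbit; $\partial\W\subset J^+$ is totally invariant under the automorphism $f$; and the attractor --- the fixed point $p$ in case~\ref{case1}, the rotation curve $\Sigma$ in case~\ref{case2} --- is a bounded subset of the Fatou set, so it has a neighbourhood disjoint from $\partial\W$.

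First I would reduce to an invariant component and fix a bounded sequence $(g_i)\subset\mathcal O(\Delta,\W)$. If $(g_i)$ has a compactly divergent subsequence we are done, so assume it has none; then it is not compactly divergent, and exactly as in Theorem~\ref{Sigmataut} there are $\zeta_i\to\zeta_0\in\Delta$ and a compact $L\subset\W$ with $g_i(\zeta_i)\in L$. Because $V$ is bounded, Montel's theorem lets me pass to a subsequence with $g_i\to g\in\mathcal O(\Delta,\C^2)$ locally uniformly; then $g(\Delta)\subset\overline\W$ and $g(\zeta_0)\in\W$. The goal is to prove $g(\Delta)\subset\W$, for then $(g_i)$ converges in $\mathcal O(\Delta,\W)$ and normality follows.

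To see this I would push everything forward by the attracting subsequence $f^{n_j}\to h$, where $h$ is the retraction onto $\Sigma$ (case~\ref{case2}) or the constant $p$ (case~\ref{case1}). Set $h_i=h\circ g_i\in\mathcal O(\Delta,\Sigma)$. Since $g_i(\zeta_0)\to g(\zeta_0)\in\W$ we have $h_i(\zeta_0)\to h(g(\zeta_0))\in\Sigma$, so $\{h_i\}$ is not compactly divergent; as $\Sigma$ is a disc or an annulus, hence taut, a subsequence satisfies $h_i\to h_\infty\in\mathcal O(\Delta,\Sigma)$ (in case~\ref{case1} the $h_i$ are constant and this is automatic). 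Next I form $k=\lim_j f^{n_j}\circ g$. On the open set $\{g\in\W\}$ one checks, exactly as in Theorem~\ref{Sigmataut}, that $k=h_\infty$; by the identity theorem $k=h_\infty$ on all of $\Delta$, and in particular $k$ maps $\Delta$ into $\Sigma$. Now suppose some $g(\zeta_1)\in\partial\W$. By total invariance of $\partial\W$ the orbit $f^{n_j}(g(\zeta_1))$ stays in $\partial\W$, so $k(\zeta_1)=\lim_j f^{n_j}(g(\zeta_1))\in\partial\W$, which is disjoint from $\Sigma$ --- contradicting $k(\zeta_1)\in\Sigma$. Hence $g(\Delta)\subset\W$, as required.

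The hard part is the one step where compactness of $\P^2$ was used silently: forming $k=\lim_j f^{n_j}\circ g$ as an honest holomorphic limit requires the family $\{f^{n_j}\circ g\}_j$ to be bounded, whereas in $\C^2$ this must be proved. Since $\overline{g(\Delta)}$ is a \emph{compact} subset of $\overline\W\subset K^+$, the needed uniform bound on forward orbits is exactly what the H\'enon filtration provides: the forward orbit of a compact subset of $K^+$ stays in a fixed compact set. The genuinely delicate point is the boundary contribution $\overline{g(\Delta)}\cap\partial\W\subset J^+$, whose forward orbits are bounded but need not converge to the attractor; I expect that controlling these orbits uniformly --- and confirming that they remain in $\partial\W$, bounded away from $\Sigma$ --- will be the main obstacle. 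Once the boundedness of $\{f^{n_j}\circ g\}_j$ is secured, the remainder is a direct transcription of the proof of Theorem~\ref{Sigmataut}.
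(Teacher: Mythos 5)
Your proposal is correct and coincides with the paper's intended argument: the paper gives no independent proof, saying only that the proof is essentially that of Theorem 2 of Weickert and of Theorem~\ref{Sigmataut}, and your transcription --- with the boundedness hypothesis replacing the lift to $\partial\mathcal{A}$ as the source of Montel compactness, and the H\'enon filtration (forward orbits of compact subsets of $K^+$ are uniformly bounded, so $\{f^{n_j}\circ g\}$ is bounded because $\overline{g(\Delta)}\subset\overline{\W}\subset K^+$) supplying the limit $k$ --- is exactly the adaptation required. The ``genuinely delicate point'' you flag in your final paragraph is not actually an obstacle: no uniform control of boundary orbits is needed, since once $k$ exists the conclusion $k(\zeta_1)\in\partial\W$ follows pointwise from closedness and total invariance of $\partial\W\subset J^+$, precisely as in the contradiction argument you had already written out.
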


The proofs are essentially the same as the proofs of Theorem 2 in \cite{Weickert} and of Theorem \ref{Sigmataut}. Similarly the proof of Theorem \ref{Siegeltaut} immediately gives:

\begin{thm}
Let $\W$ be a recurrent Fatou component for a (generalized) H{\'e}non map $f$. Suppose that $\W$ is a Siegel disc that admits a $\mathbb{T}^2$ action. Then $\W$ is taut. 
\end{thm}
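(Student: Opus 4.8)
The plan is to follow the proof of Theorem~\ref{Siegeltaut} almost verbatim, the one genuinely new point being that we may no longer invoke Ueda's hyperbolicity result, which was stated only for Fatou components of endomorphisms of $\P^n$. First I would record that $\W$ is pseudoconvex (as for the endomorphism case, I expect this to follow exactly as in \cite{FS}) and that the limit group $\Gamma_f \cong \mathbb{T}^2 \times F$ from Proposition~\ref{Siegel} acts on $\W$ by automorphisms, so that the hypotheses of the Barrett--Bedford--Katok theorem \cite{BBK} are satisfied. This produces a biholomorphism from $\W$ onto a Reinhardt domain $V \subset \C^2$. As in Theorem~\ref{Siegeltaut}, tautness of $\W$ will then follow from Zwonek's theorem \cite{Zwonek} as soon as we know that $V$, equivalently $\W$, is hyperbolic.

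Establishing hyperbolicity is the step that must be argued differently in the H\'enon setting, and it is the only real obstacle. Here I would exploit that a (generalized) H\'enon map is a polynomial automorphism of $\C^2$: since the invariant component satisfies $f(\W) = \W$ and $f$ is injective, $f|_\W$ is a biholomorphism of $\W$, and $\Gamma_f$ is a genuine \emph{group} of automorphisms of $\W$. Fix $z \in \W$. Because $\Gamma_f \cong \mathbb{T}^2 \times F$ is compact and the evaluation $g \mapsto g(z)$ is continuous for the topology of uniform convergence on compacts, the set $\set{g(z) : g \in \Gamma_f}$ is compact, hence bounded, in $\C^2$. Since $f \in \Gamma_f$ and $\Gamma_f$ is a group we have $f^{-1} \in \Gamma_f$ and $f^{-n} \in \Gamma_f$ for all $n$, so this compact set contains both the entire forward orbit $\set{f^n(z)}_{n \ge 0}$ and the entire backward orbit $\set{f^{-n}(z)}_{n \ge 0}$. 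Thus both orbits are bounded, i.e. $z \in K^+ \cap K^- = K$. As $z \in \W$ was arbitrary, $\W \subset K$, and for a (generalized) H\'enon map $K$ is compact. Therefore $\W$ is a bounded domain in $\C^2$, and in particular Kobayashi hyperbolic.

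With hyperbolicity in hand the argument closes exactly as in Theorem~\ref{Siegeltaut}: $V$ is a bounded, hence hyperbolic, pseudoconvex Reinhardt domain, so by \cite{Zwonek} it is taut, and tautness transfers back to $\W$ through the biholomorphism. The hard part, as indicated, is the boundedness step, since it is precisely what replaces the appeal to Ueda's theorem that was available in $\P^2$; the points I would be most careful about are verifying that $f^{-1}$ genuinely lies in $\Gamma_f$ (so that backward orbits are controlled) and that the orbit-closure argument really forces all of $\W$ into the compact set $K$. Once those are pinned down, the remaining ingredients --- pseudoconvexity, the torus action, and Zwonek's tautness criterion --- are identical to the proof of Theorem~\ref{Siegeltaut}.
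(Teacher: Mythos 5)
Your proposal is correct and takes essentially the same route as the paper, which simply asserts that the proof of Theorem \ref{Siegeltaut} (pseudoconvexity plus the $\mathbb{T}^2$-action, then \cite{BBK} and \cite{Zwonek}) carries over verbatim; your boundedness argument --- compactness of $\Gamma_f \cong \mathbb{T}^2 \times F$ forces the forward and backward orbits of every $z \in \W$ into a fixed compact set, so $\W \subset K^+ \cap K^-$ is bounded and hence Kobayashi hyperbolic --- correctly supplies the one step the paper leaves implicit, and it is exactly the point where Ueda's hyperbolicity theorem is unavailable in the H\'enon setting. One cosmetic slip in your last paragraph: the Reinhardt domain $V$ need not itself be bounded (boundedness is not biholomorphically invariant), but $V$ is hyperbolic because $\W$ is, which is all that \cite{Zwonek} requires.
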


\end{section}

\bigskip

\noindent Han Peters,\\
Korteweg de Vries Instituut voor Wiskunde \\
Universiteit van Amsterdam \\
P.O. Box 94248 \\
1090 GE AMSTERDAM \\
Netherlands\\
h.peters@uva.nl \\

\noindent Crystal Zeager\\
Mathematics Department\\
University of Michigan\\
East Hall, Ann Arbor, MI 48109\\
USA\\
zeagerc@umich.edu\\

\newpage

\end{document}